 \newtheorem{thmA}{Theorem}
\newtheorem{propA}[thmA]{Proposition}
\newtheorem{corA}[thmA]{Corollary}
\newtheorem{quA}[thmA]{Question}
\newtheorem{theorem}{Theorem}[section]
\newtheorem{lemma}[theorem]{Lemma}
\newtheorem{proposition}[theorem]{Proposition}
\newtheorem*{theorem*}{Theorem}
\theoremstyle{definition}
\newtheorem{remark}[theorem]{Remark}
\newtheorem{question}[theorem]{Question}
\newtheorem*{remark*}{Remark}
\newtheorem*{notation*}{Notation}
\newtheorem*{acks*}{Acknowledgements}
\newtheorem*{out*}{Outline}
\renewcommand\leq{\leqslant}
\newcommand{\HHH}{\operatorname{H}}
\newcommand{\Z}{\mathbb{Z}}
\newcommand{\C}{\mathbb{C}}
\newcommand{\T}{\mathrm{(T)}}
\newcommand{\TT}{\mathrm{(T_2)}}
\newcommand{\TTT}{\mathrm{[T_2]}}
\newcommand{\normal}[1]{\left<\! \left< #1\right> \!\right>}
\newcommand{\G}{\mathcal{G}}
\newcommand{\Un}{\mathrm{U}(n)}
\begin{document}

\title{Stability, approximable quotients, \\ and higher property (T)}
\author{Francesco Fournier-Facio}
\date{\today}
\maketitle

\begin{abstract}
We construct a wealth of groups that are finitely presented, Frobenius stable, have property $\T$, but are very far from having property $\TT$. Our method also shows that property $\TT$ does not pass to quotients. A post-scriptum relates and comments on an experience this paper had with an AI benchmark.
\end{abstract}

\section{Introduction}

Let $\G$ be a family of groups equipped with bi-invariant metrics, and let $\Gamma$ be a countable group. An \emph{asymptotic homomorphism} is a sequence of maps $\phi_n \colon \Gamma \to (G_n, d_n) \in \G$ such that
\[d_n(\phi_n(gh), \phi_n(g)\phi_n(h)) \to 0 \text{ for all } g, h \in \Gamma.\]
We say that $\Gamma$ is \emph{$\G$-stable}\footnote{here we are only concerned with \emph{pointwise} stability, as opposed to more classical notions of \emph{uniform} stability \cite{kazhdan}} if every asymptotic homomorphism $(\phi_n)_n$ is \emph{close} to a sequence of genuine homomorphisms $(\psi_n)_n$, that is
\[d_n(\phi_n(g), \psi_n(g)) \to 0 \text{ for all } g \in \Gamma.\]
Stability has received much attention in the past few years, especially when $\G$ is the family of symmetric groups with the normalised Hamming distance \cite{AP, BLT}, or the family of finite-dimensional unitary groups with a bi-invariant matrix norm, such as the Hilbert--Schmidt \cite{HS, LV} or operator norm \cite{voiculescu, dadarlat}.

In this note we are mostly concerned with the family of finite-dimensional unitary groups equipped with the \emph{Frobenius norm}, which is the Euclidean norm induced by the embedding $\Un \hookrightarrow \mathbb{C}^{n \times n}$. This was first studied by De Chiffre--Glebsky--Lubotzky--Thom, who proved that if $\Gamma$ is a finitely presented group such that $\HHH^2(\Gamma; V) = 0$ for every unitary $\Gamma$-representation $V$, then $\Gamma$ is Frobenius stable \cite{DCGLT}. This applies to finitely generated virtually free groups, and more interestingly, it applies to groups with higher property $\T$. Following Bader--Sauer \cite{T2}, we say that $\Gamma$ has \emph{property $\TTT$} if $\HHH^i(\Gamma; V) = 0$ for every unitary $\Gamma$-representation $V$ and $i \in \{1, 2\}$. In \cite{DCGLT}, this is used to prove Frobenius stability of some higher rank $p$-adic lattices and their finite central extensions.

More examples of Frobenius stable groups arose recently in work of Bader--Lubotzky--Sauer--Weinberger \cite{BLSW}. These are higher rank lattices that need not have property $\TTT$, but the only obstruction comes from trivial coefficients: these lattices still have the more permissive \emph{property $\TT$}, meaning that the second cohomology vanishing holds under the additional assumption that $V^{\Gamma} = 0$. To the best of our knowledge, these are all known examples of Frobenius stable groups, in fact Frobenius stability seems to be a rather special property: see \cite{ESS}\footnote{all of the examples of operator unstable groups from \cite{ESS} also witness Frobenius instability, thanks to the inequalities $\| \cdot \|_{\mathrm{op}} \leq \| \cdot \|_2 \leq \sqrt{n}\| \cdot \|_{\mathrm{op}}$}, \cite[Section 2]{DCGLT} and \cite{glebe1, glebe2}. We present new examples, which have property $\T$ (separating them from virtually free groups), but are very far from having property $\TT$ (separating them from higher rank lattices).

\begin{thmA}
\label{main}

Every countable (recursively presented) group embeds into a (finitely presented) group $\Gamma$ with property $\T$, that is Frobenius stable but does not have property $\TT$. In fact, $\HHH^2(\Gamma; V) \neq 0$ for \emph{every} unitary $\Gamma$-representation $V$.
\end{thmA}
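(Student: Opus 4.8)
The plan is to realise $\Gamma$ as a quotient $\Gamma = \tilde{\Gamma}/N$ of a group $\tilde{\Gamma}$ that satisfies the full De Chiffre--Glebsky--Lubotzky--Thom vanishing hypothesis, engineered so that the non-vanishing of $\HHH^2$ is forced entirely by the kernel $N$, while Frobenius stability is transported down from $\tilde{\Gamma}$. Concretely I would ask $\tilde{\Gamma}$ to be finitely presented with property $\TT$ and also $\HHH^2(\tilde{\Gamma};\mathbb{C}) = 0$; together with property $\T$ this gives $\HHH^1(\tilde{\Gamma};V) = \HHH^2(\tilde{\Gamma};V) = 0$ for every unitary $V$, so $\tilde{\Gamma}$ is Frobenius stable by the cited result, and its quotient $\Gamma$ automatically inherits property $\T$. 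To handle the embedding and finiteness, I would first place the given countable (recursively presented) group $C$ into a finitely presented group by a Higman-type embedding, then embed the latter into a finitely presented group realising $\tilde{\Gamma}$ with the required vanishing, arranging $C \cap N = 1$ so that $C \hookrightarrow \Gamma$.

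For the cohomological heart I would run the five-term inflation--restriction exact sequence of $1 \to N \to \tilde{\Gamma} \to \Gamma \to 1$. Since any unitary $\Gamma$-representation $V$ factors through $\Gamma$ we have $V^N = V$, and the vanishing of $\HHH^1(\tilde{\Gamma};V)$ and $\HHH^2(\tilde{\Gamma};V)$ collapses the sequence
\[0 \to \HHH^1(\Gamma;V) \to \HHH^1(\tilde{\Gamma};V) \to \HHH^1(N;V)^{\Gamma} \to \HHH^2(\Gamma;V) \to \HHH^2(\tilde{\Gamma};V)\]
to an isomorphism $\HHH^2(\Gamma;V) \cong \HHH^1(N;V)^{\Gamma}$, valid for \emph{every} $V$. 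Because $N$ acts trivially on $V$, one has $\HHH^1(N;V)^{\Gamma} = \Hom_{\Z[\Gamma]}(N^{\mathrm{ab}}, V)$, where $N^{\mathrm{ab}}$ carries the conjugation action. The decisive design choice is therefore to make $N$ a free normal subgroup whose abelianisation is the regular module: taking $N$ to be freely generated by a single $\tilde{\Gamma}$-conjugacy class on which $\Gamma$ acts regularly gives $N^{\mathrm{ab}} \cong \Z[\Gamma]$, whence $\Hom_{\Z[\Gamma]}(\Z[\Gamma], V) \cong V \neq 0$ and thus $\HHH^2(\Gamma;V) \neq 0$ for all $V$. Producing such a free $N$ inside a group that still satisfies the DCGLT vanishing is the main \emph{structural} obstacle.

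For Frobenius stability I would argue by descent. Given an asymptotic homomorphism $\phi_n \colon \Gamma \to \mathrm{U}(d_n)$, inflate it to $\tilde{\phi}_n = \phi_n \circ p \colon \tilde{\Gamma} \to \mathrm{U}(d_n)$, which is again asymptotic and is \emph{exactly} trivial on $N$. Stability of $\tilde{\Gamma}$ yields genuine homomorphisms $\tilde{\psi}_n$ with $\tilde{\psi}_n \approx \tilde{\phi}_n$, so $\tilde{\psi}_n|_N$ is a genuine unitary representation of $N$ that is Frobenius-close to the trivial one. The obstruction to correcting $\phi_n$ itself is a class $\omega \in \HHH^2(\Gamma;\mathcal{V})$ in the defect module $\mathcal{V}$ (the ultraproduct of the spaces $\mathrm{M}_{d_n}(\mathbb{C})$ with the Frobenius inner product and the adjoint action), and the naturality of the five-term sequence identifies its inflation in $\HHH^2(\tilde{\Gamma};\mathcal{V})$ with the obstruction of $\tilde{\phi}_n$, which vanishes. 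Thus $\omega$ is transgressed from $\HHH^1(N;\mathcal{V})^{\Gamma}$, and I would show it is zero by trivialising $\tilde{\psi}_n|_N$: using the freeness of $N$ together with the rigidity of property $\T$ to snap this close-to-trivial representation to the genuine trivial one without moving $\tilde{\psi}_n$ appreciably, thereby producing homomorphisms that factor through $\Gamma$ and approximate $\phi_n$.

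The step I expect to be the main obstacle is exactly this last descent. Although $\HHH^2(\Gamma;\mathcal{V}) \neq 0$ in general, I must prove that the particular obstruction attached to an asymptotic homomorphism of $\Gamma$ — which never interacts with $N$ — always lies in the image of the transgression and is in fact null, i.e. that $\tilde{\psi}_n|_N$ can be trivialised compatibly with $\tilde{\psi}_n$. This is the precise point where the regular/free structure of $N^{\mathrm{ab}}$ chosen above and the spectral rigidity of property $\T$ must be combined, and it is the mechanism that lets Frobenius stability coexist with the total failure of property $\TT$; the very quotient $\tilde{\Gamma} \twoheadrightarrow \Gamma$ then simultaneously witnesses that property $\TT$ does not pass to quotients.
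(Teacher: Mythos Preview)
Your cohomological mechanism for forcing $\HHH^2(\Gamma;V)\neq 0$ --- kill a normal subgroup that is free on a single $\Gamma$-orbit so that the five-term sequence gives $\HHH^2(\Gamma;V)\cong \Hom_{\Z[\Gamma]}(N^{\mathrm{ab}},V)\cong V$ --- is exactly right, and is essentially what the paper does via Cohen--Lyndon triples and Dehn filling. The two genuine gaps are elsewhere.

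First, you do not explain how to build $\tilde{\Gamma}$. You need a \emph{finitely presented} group with property $\TTT$ that contains an \emph{arbitrary} countable group $C$ and admits a free normal subgroup missing $C$. The only known sources of $\TTT$ are certain rank-one hyperbolic lattices and higher-rank lattices; neither can absorb an arbitrary countable group (hyperbolic groups contain no $\Z^2$, higher-rank lattices have rigid subgroup structure). Nothing in your outline produces such a $\tilde{\Gamma}$, and this is not a routine step.

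Second, and more seriously, your descent argument for Frobenius stability does not work. You observe that the inflated obstruction vanishes in $\HHH^2(\tilde{\Gamma};\mathcal V)$, hence $\omega$ lies in the image of the transgression; but in your setup the transgression is an \emph{isomorphism}, so this is no constraint on $\omega$ at all. The proposed fix --- ``snap $\tilde{\psi}_n|_N$ to trivial using freeness of $N$ and property $\T$'' --- has no content: $N$ is free, so property $\T$ says nothing about its representations, and being Frobenius-close to trivial on a generating conjugacy class does not let you perturb the global $\tilde{\psi}_n$ to kill $N$. Frobenius stability simply does not pass to quotients (free groups are stable, their quotients need not be), and nothing in your setup circumvents this. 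The paper sidesteps the issue entirely: it never tries to descend stability. Instead it uses the existence of a non-Frobenius-approximable group $A$ and small cancellation over relatively hyperbolic groups to build a property $\T$ group in which a simple group containing $A$ normally generates; such a group has \emph{no non-trivial Frobenius-approximable quotient}, so stability is vacuous. The Dehn filling that breaks $\TT$ is then applied to this group, and ``no approximable quotients'' is automatically inherited by further quotients.
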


A group is \emph{$\G$-approximable} if it admits an asymptotic homomorphism $(\phi_n)_n$ such that:
\[\liminf\limits_{n \to \infty} d_n(\phi_n(g), 1_{G_n}) > 0 \text{ for all } g \in \Gamma \setminus \{1_{\Gamma}\}.\]
Many important properties in group theory are $\G$-approximability properties for various families $\G$, for example soficity \cite{weiss}, hyperlinearity \cite{radulescu} and matricial finiteness \cite{MF}. In these cases, it is an open problem whether there exists a non-approximable group. The main exception is the Frobenius norm \cite{DCGLT, BLSW} or more generally the unnormalised Schatten $p$-norm for $p \in [1, \infty)$ \cite{p:approx, 1:approx}, where non-approximable groups are known to exist.

It is easy to see that every asymptotic homomorphism is close to one that factors through an approximable quotient \cite[Section 2.3]{DCGLT}. Therefore groups with no non-trivial approximable quotients are (vacuously) stable. \cref{main} exploits this observation.

\begin{propA}
\label{no approx quotients}
Suppose that there exists a non-$\G$-approximable group $A$. Then for every countable group $C$, there exists a group $\Gamma$ that is hyperbolic relative to a subgroup $K$ containing a copy of $C$, such that $\Gamma$ has property $\T$, and no non-trivial $\G$-approximable quotient. Moreover, if $A$ has solvable word problem and $C$ is recursively presented, then $\Gamma$ can be chosen to be finitely presented.
\end{propA}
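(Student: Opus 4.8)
The plan is to isolate a single algebraic requirement that forces every non-trivial quotient to be non-approximable, and then to build a relatively hyperbolic group with property $\T$ meeting it. Note first that $\G$-approximability passes to subgroups: restricting an asymptotic homomorphism to a subgroup preserves both the asymptotic multiplicativity and the lower bound $\liminf_n d_n(\phi_n(g),1_{G_n})>0$. Hence any group containing a non-$\G$-approximable subgroup is itself non-$\G$-approximable. I would therefore embed a \emph{fixed} non-$\G$-approximable group into $\Gamma$ so that it injects into every proper quotient. To make this condition easy to enforce, I would replace the given non-approximable group $A$ by a simple group $A'$ containing it (still non-approximable, as it contains $A$). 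For $A'$ simple, ``$A'$ injects into every proper quotient'' is equivalent to the single requirement $\normal{A'}=\Gamma$: if $N\triangleleft\Gamma$ is proper then $A'\cap N$ is a normal subgroup of $A'$ which cannot equal $A'$ (else $\Gamma=\normal{A'}\le N$), hence is trivial by simplicity, so $A'\hookrightarrow\Gamma/N$ and the quotient is non-$\G$-approximable. Thus the proposition reduces to constructing $\Gamma$, finitely generated, hyperbolic relative to some $K\supseteq C$ and with property $\T$, containing a simple non-$\G$-approximable subgroup $A'$ that normally generates it.

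\textbf{Construction.} I would begin with a relatively hyperbolic group assembled from the required pieces and then impose carefully chosen relations. Take $G_0=A'\ast K_0$, where $K_0$ is any group containing $C$ (for instance $K_0=C$); this is hyperbolic relative to $\{A',K_0\}$, and I would allow myself to adjoin a non-elementary hyperbolic group with property $\T$ as a further free factor to seed rigidity. I would then form $\Gamma=G_0/\normal{R}$ for a family $R$ of relations satisfying a small-cancellation condition \emph{over the relatively hyperbolic group} $G_0$, in the sense of Osin, so that $\Gamma$ is again relatively hyperbolic, now relative to a single peripheral subgroup $K\supseteq C$, and remains finitely generated with the images of $A'$ and of $C$ embedded. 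The relations would be generic words placing each defining generator of $K_0$ (and of the auxiliary factor) inside $\normal{A'}$, thereby forcing $\normal{A'}=\Gamma$ while, by the genericity afforded by small cancellation, keeping $A'$ and $C$ injective. By the reduction, every non-trivial quotient of $\Gamma$ then contains $A'$ and is non-$\G$-approximable.

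\textbf{The main obstacle: property $\T$.} The hard part is property $\T$. Relative hyperbolicity never implies $\T$ on its own --- free products are the extreme counterexample --- and the peripheral subgroup $K\supseteq C$ need not have $\T$ and may even be amenable, so $\T$ cannot be extracted from the peripheral structure. I would instead verify directly that $\HHH^1(\Gamma;V)=0$ for every unitary $\Gamma$-representation $V$, which by Delorme--Guichardet is property $\T$. Through the long exact cohomology sequence of the pair $(\Gamma,\{K\})$, the reduced first cohomology of $\Gamma$ is governed by that of $K$ together with a relative term supported on the hyperbolic directions, and the family $R$ must be chosen so that both contributions are killed. The delicate point is to balance \emph{enough} relations to force $\HHH^1(\Gamma;V)=0$ against \emph{few enough} relations to keep $A'$ and $K$ embedded and to preserve relative hyperbolicity. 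That this is not formally obstructed is reassuring: a hyperbolic group with $\T$ is hyperbolic relative to any of its malnormal quasiconvex free subgroups, producing a properly relatively hyperbolic group with $\T$ whose peripheral subgroup fails $\T$; the construction must reproduce this phenomenon with the \emph{prescribed} peripheral $K\supseteq C$.

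\textbf{Finite presentation.} Finally, for the ``moreover'' clause I would run the same construction effectively. If $A$ has solvable word problem then, by the Boone--Higman theorem (together with Kuznetsov's observation that a recursively presented finitely generated simple group has solvable word problem), its simple overgroup $A'$ can be arranged inside a finitely presented group; if $C$ is recursively presented, Higman's embedding theorem places it inside a finitely presented $K$. Using that a relatively hyperbolic group with finitely presented peripheral subgroups is finitely presented, and that $R$ can be taken finite in the effective setting, the output $\Gamma$ is finitely presented. I expect this last step to be routine but fiddly, the one subtlety being that $A'$ need not itself be finitely presented and so must be realised \emph{inside} the finite presentation rather than as a free factor.
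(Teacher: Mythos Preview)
Your reduction (embed a simple non-approximable group that normally generates $\Gamma$) is exactly the paper's Lemma~2.1, and starting from a free product with a hyperbolic $\T$ factor and then quotienting via Osin's small cancellation is also the paper's route. The genuine gap is property~$\T$. You flag it as ``the hard part'' but do not prove it: saying that ``the family $R$ must be chosen so that both contributions are killed'' and that one must ``balance enough relations against few enough relations'' describes a constraint, not a verification that it can be met. A direct computation of $\HHH^1(\Gamma;V)$ for an arbitrary unitary $V$ over a generic small-cancellation quotient is not available, and the long exact sequence of the pair $(\Gamma,K)$ gives no control when $K$ is arbitrary and need not itself have $\T$.

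The idea you are missing is simply that property~$\T$ passes to quotients, so one arranges for the hyperbolic $\T$ group $\Lambda$ to \emph{surject} onto the quotient rather than verifying $\T$ afterwards. The paper sets $K=H\ast P$ (with $H\supseteq C$ via Higman, and $A\subseteq S\subseteq P$ with $S$ simple and $P$ finitely presented via Boone--Higman) and $\Gamma=\Lambda\ast K$. A first round of small cancellation pushes each generator of $K$ into the image of $\Lambda$, producing $\Gamma_1$ onto which $\Lambda$ surjects, hence $\Gamma_1$ has $\T$. A second round then pushes each generator of $\Gamma_1$ into $\normal{\pi_1(S)}$, so in the resulting $\Gamma_2$ the simple group $S$ normally generates; $\Gamma_2$ inherits $\T$ as a further quotient of $\Gamma_1$. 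Both rounds preserve injectivity on $K$, relative hyperbolicity with peripheral $K$, and finite presentability. This two-step device is what replaces your unspecified balancing act, and it also dissolves your finite-presentation worry: the possibly infinitely presented simple group $S$ lives inside the finitely presented $P\subset K$, not as a free factor.
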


This is in the spirit of several results in geometric group theory that, from the existence of a non-residually finite hyperbolic group, deduce the existence of hyperbolic groups with more striking properties (e.g. no finite quotient \cite{kapovichwise}, or even no amenable quotient \cite{arzhantseva}). Groups with no approximable quotients with a subset of the above properties can be built more easily (see \cref{easier}), however we are interested in preserving at once finite presentability, since this is the setting in which cohomological methods can be applied, and relative hyperbolicity, which we will need later to apply \cref{one rel quotients}.

Moreover, we insist on imposing property $\T$ because of the special relation that this property has with questions of stability and approximability. Becker--Lubotzky showed that if $\Gamma$ is an infinite sofic group with property $\T$, then it is not permutation stable \cite{BL:T}. \cref{no approx quotients} shows that the soficity hypothesis is either vacuous, if all groups are sofic, or necessary. The same holds for hyperlinearity and Hilbert--Schmidt stability, and also for the local versions \cite{localP, localHS}. In another direction, some weak forms of stability for certain property $\T$ groups imply the existence of non-sofic or non-hyperlinear groups \cite{burtonbowen, alon, alon:lattices, chapmanpeled}; \cref{no approx quotients} can be seen as a sort of converse of this philosophy.

\medskip

To go from \cref{no approx quotients} to \cref{main}, we only need to ensure that the groups constructed enjoy the strong negation of property $\TT$.

\begin{propA}
\label{one rel quotients}

Let $\Gamma$ be a (finitely presented) infinite group with property $\T$, that is hyperbolic relative to a subgroup $K$. Then there exists a (finitely presented) quotient $\pi \colon \Gamma \to \bar{\Gamma}$ that has finite kernel when restricted to $K$, such that $\bar{\Gamma}$ is hyperbolic relative to $\pi(K)$, and does not have property $\TT$. In fact, $\HHH^2(\bar{\Gamma}; V) \neq 0$ for every unitary $\bar{\Gamma}$-representation $V$.
\end{propA}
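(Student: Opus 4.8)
The plan is to realise $\bar\Gamma$ as a one-relator quotient $\bar\Gamma = \Gamma/\normal{w}$ for a carefully chosen $w \in \Gamma$, and to extract the strong failure of property $\TT$ from an inflation--restriction argument in which property $\T$ of $\Gamma$ is the crucial input. Write $N = \normal{w} = \ker \pi$. Since $V$ is inflated from $\bar\Gamma$, the subgroup $N$ acts trivially on $V$, so $\HHH^1(N; V) = \Hom(N^{\mathrm{ab}}, V)$, and the five-term exact sequence for $1 \to N \to \Gamma \to \bar\Gamma \to 1$ reads
\[0 \to \HHH^1(\bar\Gamma; V) \to \HHH^1(\Gamma; V) \to \Hom_{\bar\Gamma}(N^{\mathrm{ab}}, V) \to \HHH^2(\bar\Gamma; V) \to \HHH^2(\Gamma; V).\]
Property $\T$ of $\Gamma$ gives $\HHH^1(\Gamma; V) = 0$ for every unitary $V$ (Delorme--Guichardet), so by exactness the connecting map induces an injection $\Hom_{\bar\Gamma}(N^{\mathrm{ab}}, V) \hookrightarrow \HHH^2(\bar\Gamma; V)$. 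Hence it suffices to arrange that the relation module $N^{\mathrm{ab}}$, regarded as a $\Z\bar\Gamma$-module via conjugation, is free of rank one, i.e.\ $N^{\mathrm{ab}} \cong \Z\bar\Gamma$: then $\Hom_{\bar\Gamma}(\Z\bar\Gamma, V) \cong V \neq 0$, so $\HHH^2(\bar\Gamma; V) \neq 0$ for every nonzero unitary $V$ simultaneously.

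For the geometric conclusions I would take $w$ to be a generic hyperbolic element of $\Gamma$, not conjugate into $K$, not a proper power, with maximal elementary subgroup $E(w) = \langle w \rangle$, and long enough to satisfy a small-cancellation condition relative to the relatively hyperbolic structure. Osin's small-cancellation theory over relatively hyperbolic groups then guarantees that $\bar\Gamma = \Gamma/\normal{w}$ is again hyperbolic relative to $\pi(K)$, that $\pi|_K$ is injective (so $\ker(\pi|_K)$ is trivial, in particular finite), that $\bar\Gamma$ is infinite, and that $\bar\Gamma$ is finitely presented whenever $\Gamma$ is, since we have added a single relator. This yields every geometric assertion of the proposition.

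The freeness of the relation module comes from the asphericity of this filling. A sufficiently deep small-cancellation filling is Cohen--Lyndon aspherical, meaning $N = \normal{w}$ is the free product of its conjugates $\{t w t^{-1}\}$ over a transversal of $\langle w \rangle$; abelianising identifies the relation module with the permutation module $N^{\mathrm{ab}} \cong \Z[\bar\Gamma / \langle \bar w \rangle]$. Because $w$ maps to $1$ in $\bar\Gamma$ and is not a proper power (and $E(w) = \langle w \rangle$ makes $\langle w \rangle$ malnormal enough), the stabiliser is trivial and this module is exactly $\Z\bar\Gamma$, closing the loop with the first paragraph.

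I expect the main obstacle to be the second step: securing all the geometric properties (relative hyperbolicity relative to $\pi(K)$, injectivity of $\pi|_K$, infiniteness) \emph{together} with Cohen--Lyndon asphericity for a single added relator. This requires invoking the correct packaging of Dehn filling and small cancellation over relatively hyperbolic groups and checking that a generic non-proper-power $w$ meets its hypotheses; one must in particular ensure $E(w) = \langle w \rangle$, so that the relation module comes out free of rank one rather than an induced module carrying torsion or a nontrivial stabiliser, which would weaken the conclusion $\Hom_{\bar\Gamma}(N^{\mathrm{ab}}, V) \cong V$.
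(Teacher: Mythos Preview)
Your approach is essentially the same as the paper's: kill a single infinite cyclic subgroup $\Lambda=\langle w\rangle$ satisfying a Cohen--Lyndon condition, use property $\T$ to annihilate $\HHH^1(\Gamma;V)$, and deduce an injection $V\hookrightarrow\HHH^2(\bar\Gamma;V)$. The paper packages the cohomological step slightly differently---via Sun's excision theorem for Cohen--Lyndon triples, which identifies $\HHH^2(\Gamma,\Lambda;V)\cong\HHH^2(\bar\Gamma;V)$ and then reads off the injection $\HHH^1(\Lambda;V)\cong V\hookrightarrow\HHH^2(\bar\Gamma;V)$ from the long exact sequence of the pair $(\Gamma,\Lambda)$---whereas you compute the relation module $N^{\mathrm{ab}}\cong\Z\bar\Gamma$ from Cohen--Lyndon and plug it into the five-term inflation--restriction sequence. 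These two routes are equivalent here, and the paper even records the inflation--restriction form in a later remark.

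There is one genuine, if small, gap. You assert that a generic loxodromic $w$ can be chosen with $E(w)=\langle w\rangle$, and then that $\pi|_K$ is injective. But if $\Gamma$ has a non-trivial finite normal subgroup $F$, then $F\langle w\rangle$ is virtually cyclic and contains $w$, so $E(w)\supsetneq\langle w\rangle$ for \emph{every} loxodromic $w$; your hypothesis cannot be met, and the relation module would only be $\Z[\bar\Gamma/\bar F]$ rather than $\Z\bar\Gamma$. The paper handles this by first passing to $\Gamma/F$ where $F$ is the maximal finite normal subgroup, and this preliminary step is exactly why the statement only promises that $\ker(\pi|_K)$ is \emph{finite}, not trivial. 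Once you insert that reduction, your argument goes through.
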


The non-vanishing holds beyond unitary representations, for instance it holds for $L^1$-coefficients, which is particularly interesting in view of upcoming work of Bader--Bader--Bader--Sauer \cite{L1:lattices}, see \cref{L1}.

\begin{proof}[Proof of \cref{main}]
Let $C$ be a countable (recursively presented) group. Up to taking a free product with $\mathbb{Z}$, may assume that $C$ has no non-trivial finite normal subgroups. There exists a non-Frobenius approximable group $A$ that is a finite central extension of a finitely presented linear group \cite{DCGLT}. Thus $A$ is an extension of finitely presented groups with solvable word problem, hence it has solvable word problem. Now combining \cref{no approx quotients} and \cref{one rel quotients}, we obtain a (finitely presented) group $\Gamma$ that has property $\T$, verifies the strong negation of property $\TT$, and has no Frobenius approximable quotients, in particular it is Frobenius stable.
\end{proof}

\cref{one rel quotients} has the following additional consequence of independent interest, which shows how differently property $\TT$ behaves from property $\T$\footnote{this was announced in \cite[Remark 57]{ICM}}.

\begin{corA}
\label{quotients not T2}

There exists a hyperbolic group with $\TTT$ with a hyperbolic quotient that does not have $\TT$. Hence, $\TTT$ and $\TT$ do not pass to quotients.
\end{corA}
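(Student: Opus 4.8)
The plan is to obtain the statement as a one-step consequence of \cref{one rel quotients}, applied to a hyperbolic group that already satisfies $\TTT$. The key observation is that a word-hyperbolic group is hyperbolic relative to its trivial subgroup, so any infinite word-hyperbolic group with property $\T$ is an admissible input for \cref{one rel quotients} with $K = \{1\}$. I would therefore start from an infinite word-hyperbolic group $\Gamma_0$ with $\TTT$: property $\TTT$ forces $\HHH^1(\Gamma_0; V) = 0$ for every unitary $V$ and in particular gives property $\T$, and I take $\Gamma_0$ infinite on purpose (finite groups satisfy the cohomological vanishing of $\TTT$ vacuously, so they must be excluded for the construction to be meaningful).

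Feeding $\Gamma_0$ into \cref{one rel quotients} with $K = \{1\}$ produces a quotient $\pi \colon \Gamma_0 \to \bar\Gamma$ whose restriction to $K$ has finite (here trivial) kernel, with $\bar\Gamma$ hyperbolic relative to $\pi(K) = \{1\}$ --- that is, $\bar\Gamma$ is itself word-hyperbolic --- and with $\HHH^2(\bar\Gamma; V) \neq 0$ for every unitary $\bar\Gamma$-representation $V$. The corollary is then immediate: taking $V$ with $V^{\bar\Gamma} = 0$ shows that $\bar\Gamma$ fails $\TT$, and a fortiori $\TTT$; meanwhile $\Gamma_0$ has $\TTT$, which contains the vanishing $\HHH^2(\Gamma_0; V) = 0$ for all unitary $V$ and hence also yields $\TT$. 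Thus $\bar\Gamma$ is a word-hyperbolic quotient of the word-hyperbolic group $\Gamma_0$ across which both $\TTT$ and $\TT$ are destroyed, which is exactly the assertion.

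The entire weight of the argument then rests on the single external input of the first paragraph: the existence of an infinite word-hyperbolic group with $\TTT$. This is the step I expect to be the genuine obstacle, since higher property $\T$ is usually a higher-rank phenomenon whereas hyperbolicity is a rank-one phenomenon, and a priori the two pull in opposite directions. I would supply $\Gamma_0$ from known constructions of hyperbolic groups with higher cohomological vanishing; the most classical candidates to try are cocompact lattices in the rank-one simple Lie groups that have property $\T$, namely $\mathrm{Sp}(n,1)$ and $F_4^{-20}$, which act properly and cocompactly on negatively curved symmetric spaces and are therefore word-hyperbolic, and failing these one would appeal to more recent explicit constructions of hyperbolic groups with higher property $\T$. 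The delicate point I would need to pin down carefully against the literature is that such a group satisfies the full all-coefficients vanishing defining $\TTT$ --- that $\HHH^1$ and $\HHH^2$ vanish for \emph{every} unitary representation, not merely for the trivial one --- since this low-degree vanishing is governed by the cohomology of the compact dual together with the size of the spectral gap of the unitary dual, and one expects it only in a suitable range. Once a single such $\Gamma_0$ is in hand, everything downstream is the routine application of \cref{one rel quotients} described above.
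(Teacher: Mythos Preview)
Your proposal is correct and matches the paper's approach exactly: apply \cref{one rel quotients} with $K=\{1\}$ to an infinite hyperbolic group with $\TTT$, and the only nontrivial step is supplying such a group. The paper pins this down by taking cocompact lattices in the Cayley plane (i.e., in $F_4^{-20}$), citing Bader--Sauer for $\TT$ and Matsushima's vanishing of the second Betti number for the upgrade to $\TTT$; your instinct to look at $F_4^{-20}$ was on target, though the $\mathrm{Sp}(n,1)$ case is not used.
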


\begin{proof}
Cocompact lattices in the Cayley plane are hyperbolic and have $\TT$: combine \cite[Theorem B and Appendix A]{T2} and \cite[Lemma 3.13]{T2}. Their second Betti number vanishes, because they are fundamental groups of non-Hermitian locally symmetric spaces of rank one \cite{matsushima}, so they also have $\TTT$. \cref{one rel quotients} gives quotients that remain hyperbolic but do not have $\TT$.
\end{proof}

All of our stable groups have no approximable quotients, in particular they have no finite quotients, so the following remains open.

\begin{quA}
Does there exist a (finitely presented) \emph{residually finite} Frobenius stable group that does not have $\TT$?
\end{quA}

A good candidate could be $\mathrm{SL}_3(\mathbb{Z})$, which does not have $\TT$ \cite{SL3} and is not known to be Frobenius stable (the results from \cite{BLSW} assume rank at least three).

\begin{acks*}
The author is supported by the Herchel Smith Postdoctoral Fellowship Fund. He thanks Segev Gonen Cohen for asking whether Frobenius stable groups have property $\TT$, and Saar Bader, Shaked Bader, Uri Bader and Roman Sauer for useful discussions around \cref{quotients not T2}. He also thanks Shaked Bader, Alon Dogon, Forrest Glebe, Roman Sauer and Andreas Thom for comments on an early draft.
\end{acks*}

\section{No approximable quotients}

\begin{lemma}
\label{normalgen}

Suppose that $\Gamma$ has a non-$\G$-approximable subgroup $A$ with the property that every non-identity element in $A$ normally generates $\Gamma$. Then $\Gamma$ has no non-trivial $\G$-approximable quotient.
\end{lemma}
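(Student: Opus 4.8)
The plan is to argue by contraposition: I will assume that $\Gamma$ admits a non-trivial $\G$-approximable quotient and derive a contradiction with the hypothesis that every non-identity element of $A$ normally generates $\Gamma$. So suppose $\pi \colon \Gamma \to Q$ is a surjection onto a non-trivial group $Q$, with kernel $N = \ker(\pi) \neq \Gamma$, and suppose $Q$ is $\G$-approximable, witnessed by an asymptotic homomorphism $\phi_n \colon Q \to (G_n, d_n)$ satisfying $\liminf_n d_n(\phi_n(q), 1_{G_n}) > 0$ for every $q \in Q \setminus \{1\}$.

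First I would pull this sequence back along $\pi$. Setting $\psi_n = \phi_n \circ \pi \colon \Gamma \to (G_n, d_n)$, the identity $\psi_n(gh) = \phi_n(\pi(g)\pi(h))$, together with the fact that $(\phi_n)_n$ is an asymptotic homomorphism of $Q$, shows that $(\psi_n)_n$ is an asymptotic homomorphism of $\Gamma$. Restricting to the subgroup $A$ then yields an asymptotic homomorphism $(\psi_n|_A)_n$ of $A$ into the family $\G$.

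Now I would invoke the non-approximability of $A$. Since $A$ is non-$\G$-approximable, the particular asymptotic homomorphism $(\psi_n|_A)_n$ cannot be a witness to approximability; negating the defining $\liminf$ condition produces a non-identity element $a \in A$ with $\liminf_n d_n(\psi_n(a), 1_{G_n}) = 0$, that is, $\liminf_n d_n(\phi_n(\pi(a)), 1_{G_n}) = 0$. But $(\phi_n)_n$ separates every non-trivial element of $Q$ from the identity, so this forces $\pi(a) = 1$, i.e.\ $a \in N$. Finally, since $a \neq 1$ lies in $A$, it normally generates $\Gamma$, and as $N$ is normal and contains $a$ we obtain $N \supseteq \normal{a} = \Gamma$, contradicting $N \neq \Gamma$.

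The argument is short, and the only place demanding care is the middle step: one must correctly read off, from the logical negation of the condition defining $\G$-approximability, that \emph{some} non-trivial element of $A$ fails to be asymptotically separated from the identity by $(\psi_n|_A)_n$ — and then observe that the separation property of the quotient's own approximating sequence is exactly what converts this into membership in $\ker(\pi)$. The normal generation hypothesis then does the rest, trading a single collapsed non-trivial element of $A$ for the collapse of all of $\Gamma$.
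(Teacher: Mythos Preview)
Your argument is correct and follows essentially the same route as the paper's proof. The paper states the key step more tersely---``because $A$ is not $\G$-approximable, it cannot embed into $\bar{\Gamma}$''---which amounts to the well-known fact that subgroups of $\G$-approximable groups are $\G$-approximable; your proof simply unpacks this by explicitly pulling back the approximating asymptotic homomorphism and reading off the failure of separation for some $a \in A \setminus \{1\}$.
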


\begin{proof}
Suppose that $\Gamma$ has a $\G$-approximable quotient $\pi \colon \Gamma \to \bar{\Gamma}$. Because $A$ is not $\G$-approximable, it cannot embed into $\bar{\Gamma}$, so the kernel of $\pi$ must intersect $A$ non-trivially. By the assumption, $\bar{\Gamma}$ must be trivial.
\end{proof}

\begin{remark}
\label{easier}

\cref{normalgen} applies when $\Gamma$ is simple. This allows to easily construct many groups with no approximable quotients, since every countable group embeds into a finitely generated simple group $\Gamma$ \cite{hall, gorjuskin}, and in fact $\Gamma$ can be chosen to have property $\T$ \cite[Theorem 4.1]{charquot}. However, such a group might have property $\TT$, and simplicity means that we cannot perform a construction analogous to \cref{one rel quotients} to obstruct it. Moreover, whether $\Gamma$ can be chosen to be finitely presented depends on whether $A$ satisfies the Boone--Higman Conjecture \cite{BH}. This is known for many linear groups \cite{scott, selfsim}, but to the best of our knowledge it is open for their (non-approximable) finite central extensions.
\end{remark}

Next we need an ingredient about quotients of relatively hyperbolic groups, which comes from small cancellation theory \cite{osin}\footnote{one could also work in the more general framework of small cancellation theory over acylindrically hyperbolic groups \cite{hull}}.

\begin{proposition}
\label{sc:product}

Let $\Lambda$ be a non-elementary hyperbolic group with no non-trivial finite normal subgroups, and let $K$ be a (finitely presented) group. Let $g_1, \ldots, g_k \in \Gamma \coloneqq \Lambda \ast K$. Then there exists a (finitely presented) quotient $\pi \colon \Gamma \to \bar{\Gamma}$ that is injective on $K$, such that $\bar{\Gamma}$ is hyperbolic relative to $\pi(K)$, has no non-trivial finite normal subgroups, and such that $\pi(g_i) \in \pi(\Lambda)$ for $i = 1, \ldots, k$.
\end{proposition}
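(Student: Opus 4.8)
The plan is to realize $\bar{\Gamma}$ as a small cancellation quotient of $\Gamma$ in the sense of \cite{osin}, using the non-elementary hyperbolic free factor $\Lambda$ as the source of the defining relators. First I would record the relatively hyperbolic structure of $\Gamma = \Lambda \ast K$: since a free product is hyperbolic relative to its factors and a non-elementary \emph{hyperbolic} factor may be absorbed into the hyperbolic part, $\Gamma$ is hyperbolic relative to the single peripheral subgroup $\{K\}$, and $\Lambda$ is a non-degenerate subgroup containing infinitely many pairwise independent loxodromic elements; in particular it is suitable for the small cancellation machinery. I would also note that $\Gamma$ has no non-trivial finite normal subgroup, since $\Lambda$ has none and, by malnormality of the factors of a free product, any finite normal subgroup of $\Gamma$ is forced to be trivial; and that $\Gamma$ is finitely presented whenever $K$ is, because $\Lambda$ is hyperbolic, hence finitely presented.

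Next I would choose the relators. To force each $g_i$ into the image of $\Lambda$, pick elements $w_1, \ldots, w_k \in \Lambda$ and set $\mathcal{R} = \{g_1 w_1^{-1}, \ldots, g_k w_k^{-1}\}$, so that in $\bar{\Gamma} \coloneqq \Gamma / \normal{\mathcal{R}}$ one has $\pi(g_i) = \pi(w_i) \in \pi(\Lambda)$ by construction. The key point is to choose each $w_i$ to be a long, generic geodesic word in $\Lambda$ — possible because $\Lambda$ is non-elementary, hence of exponential growth and rich in small cancellation sequences — so that the long $\Lambda$-syllable $w_i^{-1}$ dominates the relator $g_i w_i^{-1}$, and all overlaps among cyclic conjugates and inverses of distinct relators, as well as with the finitely many subwords occurring in the fixed elements $g_j$, remain short. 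With the $w_i$ chosen this way, the symmetrised set generated by $\mathcal{R}$ satisfies Osin's small cancellation condition relative to the peripheral structure $\{K\}$.

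I would then invoke Osin's small cancellation theorem over relatively hyperbolic groups \cite{osin} (equivalently, Hull's version over acylindrically hyperbolic groups \cite{hull}): the quotient $\bar{\Gamma}$ is again hyperbolic relative to the image $\pi(K)$, the peripheral subgroup embeds so that $\pi|_K$ is injective, and $\pi(\Lambda)$ remains non-elementary. Triviality of the finite radical of $\bar{\Gamma}$ then follows because the non-peripheral hyperbolic directions coming from $\pi(\Lambda)$ detect any finite normal subgroup, exactly as for $\Lambda$ itself, so the set-up can be arranged to keep it trivial. Finally, since $\mathcal{R}$ is finite, $\bar{\Gamma}$ is finitely presented whenever $\Gamma$ (equivalently $K$) is.

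The main obstacle is the middle step: verifying that the relators $g_i w_i^{-1}$ can be made small-cancellation while simultaneously achieving $\pi(g_i) \in \pi(\Lambda)$. The subtlety is that $\Lambda$ lies in the hyperbolic part rather than among the peripheral subgroups, so overlaps must be measured in the hyperbolic metric and controlled through a genericity and length argument for the $w_i$; tracking the finite radical across the quotient requires similar care. These are precisely the phenomena that the frameworks of \cite{osin} and \cite{hull} are designed to handle, so the difficulty lies in assembling the machinery correctly rather than in any new idea.
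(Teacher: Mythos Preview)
Your proposal is correct and follows essentially the same approach as the paper: both recognise that $\Gamma=\Lambda\ast K$ is hyperbolic relative to $K$ with $\Lambda$ a suitable subgroup, and obtain $\bar{\Gamma}$ via Osin's small cancellation over relatively hyperbolic groups. The only difference is packaging: the paper invokes \cite[Theorem~2.4]{osin} as a black box (suitability of $\Lambda$ already guarantees a quotient with $\pi(g_i)\in\pi(\Lambda)$, injectivity on $K$, relative hyperbolicity, and trivial finite radical), whereas you unpack the construction of the relators $g_i w_i^{-1}$ by hand.
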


\begin{proposition}
\label{sc:normal}

Let $\Gamma$ be a (finitely presented) group that is hyperbolic relative to a subgroup $K$, and suppose moreover that $\Gamma$ has no non-trivial finite normal subgroups. Let $g_1, \ldots, g_k \in \Gamma$ and let $\Lambda < \Gamma$ be a non-trivial normal subgroup. Then there exists a (finitely presented) quotient $\pi \colon \Gamma \to \bar{\Gamma}$ that is injective on $K$, such that $\bar{\Gamma}$ is hyperbolic relative to $\pi(K)$, has no non-trivial finite normal subgroups, and $\pi(g_i) \in \pi(\Lambda)$ for $i = 1, \ldots, k$.
\end{proposition}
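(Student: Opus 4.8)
The argument runs parallel to \cref{sc:product}: I would realize $\bar{\Gamma}$ as a small cancellation quotient of $\Gamma$ over its relatively hyperbolic structure (\cite{osin}, or \cite{hull} in the acylindrical framework), with the added relators forcing each $g_i$ into $\pi(\Lambda)$. The one genuinely new point — and the main obstacle — is that in \cref{sc:product} the subgroup playing this role is a non-elementary hyperbolic free factor, manifestly in good position, whereas here I must first show that the \emph{normal} subgroup $\Lambda$ is a suitable subgroup for the relatively hyperbolic structure. I may assume $\Gamma$ is non-elementary (otherwise the statement is degenerate), so that $\Gamma$ acts acylindrically on its coned-off Cayley graph, with trivial finite radical $E(\Gamma) = 1$ by hypothesis.

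First I would verify suitability of $\Lambda$. Since $E(\Gamma) = 1$, the normal subgroup $\Lambda$ is infinite, and by the theory of normal subgroups of acylindrically hyperbolic groups \cite{osin, hull} it contains a loxodromic element $\ell$; as $\Gamma$ is non-elementary, I may conjugate $\ell$ by a suitable $g \in \Gamma$ to obtain a second loxodromic $g \ell g^{-1} \in \Lambda$ non-commensurable with $\ell$. These loxodromics are automatically hyperbolic rather than parabolic for the relative structure: a normal subgroup conjugate into $K$ would lie in $\bigcap_{x} xKx^{-1}$, which is finite by almost malnormality of $K$. Finally, $\Lambda$ normalizes no non-trivial finite subgroup $F$, since such an $F$ would be centralized by a finite-index, hence still non-elementary, subgroup of $\Lambda$, forcing $F \leq E_{\Gamma}(\ell) \cap E_{\Gamma}(g\ell g^{-1}) = E(\Gamma) = 1$. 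Thus $\Lambda$ is suitable, and this is the crux of the whole argument.

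With $\Lambda$ suitable, I would feed it together with $g_1, \dots, g_k$ into Osin's small cancellation theorem. This returns elements $\mu_1, \dots, \mu_k \in \Lambda$ (words in loxodromic elements of $\Lambda$) and the quotient $\pi \colon \Gamma \to \bar{\Gamma} \coloneqq \Gamma / \normal{g_1 \mu_1^{-1}, \dots, g_k \mu_k^{-1}}$, which is hyperbolic relative to $\pi(K)$, injective on $K$, and satisfies $\pi(g_i) = \pi(\mu_i) \in \pi(\Lambda)$. Finite presentability is immediate: $\Gamma$ and $K$ are finitely presented and only finitely many relators are added, so $\pi(K) \cong K$ is a finitely presented peripheral subgroup of the finitely presented group $\bar{\Gamma}$. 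It remains to see that $\bar{\Gamma}$ has no non-trivial finite normal subgroup; here $\bar{\Gamma}$ is again non-elementary relatively hyperbolic, and Osin's construction may be arranged so that its finite radical equals $\pi(E(\Gamma)) = 1$, the relators being long loxodromic words that create no new finite normal subgroup. Once $\Lambda$ is known to be suitable, every clause in the conclusion is part of the standard output of \cite{osin}.
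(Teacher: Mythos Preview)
Your approach is the same as the paper's: both reduce to \cite[Theorem 2.4]{osin} once $\Lambda$ is known to be suitable, and the paper simply cites \cite[Lemma 3.23]{CIOS} for the fact that every non-trivial normal subgroup of a relatively hyperbolic group with trivial finite radical is suitable, whereas you sketch this lemma by hand. One small imprecision in your sketch: the equality $E_\Gamma(\ell)\cap E_\Gamma(g\ell g^{-1})=E(\Gamma)$ need not hold for a single conjugate, but since $\Lambda$ is normal you may intersect over \emph{all} $g\in\Gamma$ to obtain a finite normal subgroup of $\Gamma$, hence trivial; with that fix, everything goes through exactly as in the paper.
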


\begin{proof}
Both propositions are an application of \cite[Theorem 2.4]{osin}, which gives the statement more generally when $\Gamma$ is hyperbolic relative to $K$ and $\Lambda$ is \emph{suitable}. This is defined in \cite[Definition 2.2]{osin}; equivalently \cite[Definition 3.21]{CIOS}: $\Lambda$ is suitable if it is not virtually cyclic, does not normalise any non-trivial finite normal subgroup of $\Gamma$, and contains an element that is not conjugate into $K$.

Now \cref{sc:product} follows, as $\Lambda$ is easily seen to be a suitable subgroup of $\Gamma \coloneqq \Lambda \ast K$, which is hyperbolic relative to $K$. \cref{sc:normal} follows from the fact that in relatively hyperbolic groups with no non-trivial finite normal subgroups, every non-trivial normal subgroup is suitable \cite[Lemma 3.23]{CIOS}. The proof of \cite[Theorem 2.4]{osin} produces the quotient by adding $k$ small cancellation relations, one for each element $g_i$, in particular the construction preserves finite presentability.
\end{proof}

\begin{proof}[Proof of \cref{no approx quotients}]
Let $A$ be a non-$\G$-approximable group, and let $C$ be a countable group. We choose a group $H$ that contains $C$ and is finitely generated \cite{HNN}, or finitely presented in case $C$ is recursively presented \cite{higman}. If $A$ has solvable word problem, then we choose a finitely presented group $P$ containing a simple subgroup $S$ containing $A$ \cite{BH}; otherwise we choose $P = S$ to be a finitely generated simple group containing a copy of $A$ (see \cref{easier}). All of the following constructions will preserve finite generation and finite presentability.

Let $K \coloneqq H \ast P$, and let $\Lambda$ be a hyperbolic group with property $\T$ and no non-trivial finite normal subgroups (such as the one used for \cref{quotients not T2}), and let $\Gamma \coloneqq \Lambda \ast K$. Applying \cref{sc:product} with $\{g_1, \ldots, g_k\}$ a generating set for $K$, we obtain a quotient $\pi_1 \colon \Gamma \to \Gamma_1$ such that the restriction $\pi|_\Lambda \colon \Lambda \to \Gamma_1$ is surjective, hence $\Gamma_1$ has property $\T$. Now apply \cref{sc:normal} with $\Lambda$ the normal closure of $\pi_1(S)$ and $\{ g_1, \ldots, g_k \}$ a generating set for $\Gamma_1$. This gives a quotient $\pi_{1, 2} \colon \Gamma_1 \to \Gamma_2$, such that if we denote by $\pi_2 \colon \Gamma \to \Gamma_2$ the composition, $\pi_2(S)$ normally generates $\Gamma_2$. Since $\pi_2(S) \cong S$ is moreover simple, every non-identity element of $\pi_2(S)$ normally generates $\Gamma_2$. Moreover $\pi_2(S)$ contains the non-approximable subgroup $\pi_2(A) \cong A$, hence by \cref{normalgen}, we conclude that $\Gamma_2$ has no approximable quotients. Finally, $\pi_2$ is injective on $K$, hence $C$ embeds into $\pi_2(K)$, the parabolic subgroup of $\Gamma_2$.
\end{proof}

\begin{remark}
The hypothesis on $A$ having solvable word problem was used in order to apply the Boone--Higman Theorem: there are embeddings $A \hookrightarrow S \hookrightarrow P$ where $S$ is simple and $P$ is finitely presented \cite{BH}. The argument would work just as well assuming only that $A$ embeds into a finitely presented group $P$ with the property that $A$ is contained in the normal closure of every $1 \neq a \in A$ in $P$. But even this weaker condition implies that $A$ has solvable word problem: Kuznetsov's algorithm applies verbatim \cite{kuz}.

It is an interesting question whether the existence of a non-sofic group implies the existence of a non-sofic group with solvable word problem. Note that groups with solvable problems are \emph{not} dense in the space of marked groups \cite[Proposition 6]{isolated}.
\end{remark}

\section{Quotients without higher property (T)}

We need another ingredient about quotients of relatively hyperbolic groups, this time coming from group-theoretic Dehn filling \cite{osin2}\footnote{again one could work in the more general framework of acylindrically hyperbolic groups, which from this point of view are groups with a non-degenerate hyperbolically embedded subgroup \cite{DGO}}. Given a group $\Gamma$, a subgroup $\Lambda < \Gamma$, and a normal subgroup $N$ of $\Lambda$, we say that $(\Gamma, \Lambda, N)$ is a \emph{Cohen--Lyndon triple} if
\[\normal{N} = \ast_{t \in T} tNt^{-1},\]
where $\normal{N}$ denotes the normal closure of $N$ in $\Gamma$, and $T$ is a set of transversals of $\Lambda\normal{N}$ in $\Gamma$. This is named after Cohen and Lyndon who established this property for cyclic subgroups of the free group \cite{CL}, and it is a common feature of group-theoretic Dehn fillings in negative curvature \cite{CL0, CL1}.

\begin{lemma}
\label{find CL}

Let $\Gamma$ be a group that is non-elementary hyperbolic relative to a subgroup $K$, and suppose moreover that $\Gamma$ has no non-trivial finite normal subgroups. Then there exists an infinite cyclic subgroup $\Lambda$ such that $(\Gamma, \Lambda, \Lambda)$ is a Cohen--Lyndon triple. Moreover, the quotient $\pi \colon \Gamma \to \Gamma / \normal{\Lambda}$ is injective on $K$, and $\Gamma / \normal{\Lambda}$ is hyperbolic relative to $\pi(K)$.
\end{lemma}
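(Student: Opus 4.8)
The plan is to obtain $\Lambda$ by a group-theoretic Dehn filling along a carefully chosen loxodromic element, and to read off the Cohen--Lyndon property directly from the Dehn filling machinery of \cite{osin2, DGO} and its refinements \cite{CL0, CL1}. Since $\Gamma$ is non-elementary hyperbolic relative to $K$, it is acylindrically hyperbolic, and the hypothesis that $\Gamma$ has no non-trivial finite normal subgroup says exactly that its finite radical is trivial. These two facts are what will let me invoke the filling theorems.

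First I would locate the subgroup to fill. Using the structure theory of acylindrically hyperbolic groups with trivial finite radical \cite{DGO}, I would find a loxodromic element $g \in \Gamma$ that is not conjugate into $K$ and whose maximal elementary subgroup is infinite cyclic and generated by $g$, i.e. $E(g) = \langle g \rangle \cong \Z$; moreover I would arrange the translation length of $g$ to be as large as I please. Concretely, such $g$ is produced by taking long products of independent loxodromics and appealing to genericity: a generic long word is loxodromic, is not conjugate into the parabolic subgroup $K$, and has cyclic maximal elementary subgroup. For such $g$, the collection $\{K, \langle g \rangle\}$ is hyperbolically embedded, so that $\langle g \rangle \he \Gamma$ relative to $K$.

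Next, set $\Lambda := \langle g \rangle$ and consider the full filling $\pi \colon \Gamma \to \Gamma / \normal{\Lambda}$. By Osin's Dehn filling theorem \cite{osin2}, in the form valid for hyperbolically embedded subgroups \cite{DGO}, once the translation length of $g$ is large enough the filling is non-pathological: the map $\pi$ is injective on the unfilled peripheral $K$, and $\Gamma / \normal{\Lambda}$ is again hyperbolic relative to $\pi(K)$. The decomposition $\normal{\Lambda} = \ast_{t \in T} t \Lambda t^{-1}$, with $T$ a transversal of $\Lambda \normal{\Lambda}$, is then exactly the output of the Cohen--Lyndon theorem for Dehn fillings \cite{CL0, CL1} applied to this filling.

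The hard part is that we are performing the \emph{full} filling $N = \Lambda = \langle g \rangle$, which is the extreme case of the filling theorems: these are usually stated for \emph{sufficiently deep} proper normal subgroups $N \trianglelefteq H$. Two features make the full filling admissible. First, $E(g) = \langle g \rangle$ forces $N_{\Gamma}(\Lambda) = \Lambda$, so that for distinct cosets $t\,\Lambda\normal{\Lambda} \neq t'\,\Lambda\normal{\Lambda}$ the conjugates $t \Lambda t^{-1}$ and $t' \Lambda t'^{-1}$ are genuinely distinct; this is what makes the free product over the transversal well posed, and it is precisely why I insist on $E(g)$ being cyclic rather than passing to a power $\langle g^n \rangle$, whose normalizer would still contain $g$ and collapse two factors. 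Second, choosing the translation length of $g$ large enough plays the role of the deepness (small cancellation) hypothesis: it forces the finite ``dangerous set'' attached to the hyperbolically embedded $\langle g \rangle$ to be empty, so that $\Lambda$ itself qualifies as an admissible filling kernel. The delicate point, and where the triviality of the finite radical is essential, is arranging simultaneously that $g$ has $E(g) = \langle g \rangle$ \emph{and} arbitrarily large translation length.
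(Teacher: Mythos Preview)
Your sketch is essentially the content behind the paper's proof, which is nothing more than a one-line citation to \cite[Lemma~3.5]{dimensions}; you have supplied considerably more detail than the paper itself. The strategy you outline---produce a loxodromic $g$ with $E(g)=\langle g\rangle$, note that $\langle g\rangle$ is hyperbolically embedded relative to $K$, and perform the full Dehn filling---is the right one, and you have correctly isolated the genuine subtlety: one needs $(\Gamma,\langle g\rangle,\langle g\rangle)$ to be Cohen--Lyndon, not merely $(\Gamma,\langle g\rangle,\langle g^n\rangle)$, and these are honestly different because $g$ normalises $\langle g^n\rangle$ and would collapse distinct transversal factors.

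One point of phrasing is slightly off. Saying that large translation length ``forces the finite dangerous set attached to the hyperbolically embedded $\langle g\rangle$ to be empty'' is misleading as written: that finite set is defined relative to the embedding of $\langle g\rangle$, so it moves with $g$, and there is no a~priori reason it becomes empty. The clean way to run the argument---and what the cited reference does---is to bypass the ``sufficiently deep $N\trianglelefteq H$'' formulation and work directly with the very rotating family picture of \cite{DGO}: once $g$ satisfies a sufficiently strong small cancellation condition over the fixed relatively hyperbolic structure (achieved by taking $g$ to be a long word in two independent loxodromics, not merely an element of large translation length), the conjugates of $\langle g\rangle$ form a very rotating family on the appropriate cone-off, and the free-product decomposition of $\normal{\langle g\rangle}$ with the correct transversal follows from \cite{CL0, CL1}. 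The triviality of the finite radical enters exactly where you say, to guarantee that a loxodromic with $E(g)=\langle g\rangle$ exists.
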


\begin{proof}
This is achieved in the course of the proof of \cite[Theorem 3.1]{dimensions}: see \cite[Lemma 3.5]{dimensions}.
\end{proof}

\begin{lemma}
\label{CL vs T2}

Let $\Lambda < \Gamma$ be such that $(\Gamma, \Lambda, \Lambda)$ is a Cohen--Lyndon triple, and denote $\bar{\Gamma} \coloneqq \Gamma / \normal{\Lambda}$. Let $V$ be a $\bar{\Gamma}$-module. Then there is an exact sequence
\[\HHH^1(\Gamma; V) \to \HHH^1(\Lambda; V) \to \HHH^2(\bar{\Gamma}; V).\]
\end{lemma}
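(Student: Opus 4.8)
The plan is to run the Lyndon--Hochschild--Serre spectral sequence for the extension $1 \to \normal{\Lambda} \to \Gamma \to \bar{\Gamma} \to 1$ and then identify the relevant terms using the Cohen--Lyndon decomposition. Write $N \coloneqq \normal{\Lambda}$. Since $V$ is a $\bar{\Gamma}$-module, $N$ acts trivially on $V$, so $V^N = V$, and the five-term exact sequence reads
\[0 \to \HHH^1(\bar{\Gamma}; V) \to \HHH^1(\Gamma; V) \xrightarrow{\mathrm{res}} \HHH^1(N; V)^{\bar{\Gamma}} \to \HHH^2(\bar{\Gamma}; V) \to \HHH^2(\Gamma; V).\]
Extracting the middle three terms already gives a sequence exact at $\HHH^1(N; V)^{\bar{\Gamma}}$; it then remains only to identify this group with $\HHH^1(\Lambda; V)$ compatibly with restriction, so that the first arrow becomes the restriction $\HHH^1(\Gamma; V) \to \HHH^1(\Lambda; V)$.

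To perform this identification, I would first use that, as $N$ acts trivially on $V$, we have $\HHH^1(N; V) = \Hom(N_{\mathrm{ab}}, V)$, and that the Cohen--Lyndon hypothesis gives $N_{\mathrm{ab}} = \bigoplus_{t \in T} (t\Lambda t^{-1})_{\mathrm{ab}}$, where $T$ is a transversal of $N$ in $\Gamma$ and hence is indexed by $\bar{\Gamma}$. The crux is that, as a $\bar{\Gamma}$-module, $N_{\mathrm{ab}}$ is the induced module $\Z[\bar{\Gamma}] \otimes_{\Z} \Lambda_{\mathrm{ab}}$: writing $[\lambda]_{\bar t}$ for the class of $t\lambda t^{-1}$, I claim $\bar{\Gamma}$ permutes the summands simply transitively and without twist, via $\bar{g} \cdot [\lambda]_{e} = [\lambda]_{\bar{g}}$. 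Granting this, the induction--restriction adjunction (degree-zero Shapiro) identifies $\HHH^1(N; V)^{\bar{\Gamma}} = \Hom_{\Z\bar{\Gamma}}(N_{\mathrm{ab}}, V) = \Hom_{\Z}(\Lambda_{\mathrm{ab}}, V) = \HHH^1(\Lambda; V)$, and this isomorphism is exactly restriction to $\Lambda$, as required.

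The step that requires care -- and the only genuine subtlety -- is verifying the untwisted simply transitive action on the summands. The issue is that for $g \in \Gamma$ and $t \in T$ one has $gt = t'm$ with $t' \in T$ but a nontrivial correction $m \in N$, so $g(t\Lambda t^{-1})g^{-1} = t'(m\Lambda m^{-1})t'^{-1}$ need not be one of the chosen free factors, and a direct computation on free-product normal forms runs in circles. This is resolved by working in the abelianization, where conjugation by $N$ acts trivially: since $t'mt'^{-1} \in N$, the elements $t'(m\lambda m^{-1})t'^{-1}$ and $t'\lambda t'^{-1}$ are conjugate in $N$ and hence have the same class in $N_{\mathrm{ab}}$, so $[g\lambda g^{-1}] = [t'\lambda t'^{-1}] = [\lambda]_{\bar{g}}$. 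This is precisely the asserted action, and simple transitivity follows from uniqueness of transversal representatives. Once this module computation is in place, the lemma follows by combining it with the five-term exact sequence above.
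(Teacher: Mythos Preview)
Your argument is correct but follows a different route from the paper. You run the Lyndon--Hochschild--Serre five-term sequence for $1 \to N \to \Gamma \to \bar{\Gamma} \to 1$ and then use the free-product decomposition of $N$ directly to identify $\HHH^1(N;V)^{\bar{\Gamma}}$ with $\HHH^1(\Lambda;V)$ via a Shapiro-type computation on $N_{\mathrm{ab}} \cong \Z[\bar{\Gamma}]\otimes_{\Z}\Lambda_{\mathrm{ab}}$. The paper instead invokes the excision theorem for Cohen--Lyndon triples \cite[Theorem A]{CL2}, which gives $\HHH^*(\Gamma,\Lambda;V) \cong \HHH^*(\bar{\Gamma},\bar{\Lambda};V)$; since here $\bar{\Lambda}=1$, this identifies $\HHH^2(\bar{\Gamma};V)$ with $\HHH^2(\Gamma,\Lambda;V)$, and the desired three-term sequence is then just a piece of the long exact sequence of the pair $(\Gamma,\Lambda)$. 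Your approach is more elementary and self-contained, and it makes the first map visibly the restriction to $\Lambda$; the paper's approach is shorter once the excision theorem is granted and works uniformly in all degrees, whereas your argument is specific to the low-degree terms accessible from the five-term sequence.
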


\begin{proof}
We apply the excision theorem for Cohen--Lyndon triples \cite[Theorem A]{CL2}: if $(\Gamma, \Lambda, N)$ is a Cohen--Lyndon triple, and we denote $\bar{\Lambda} \coloneqq \Lambda/N, \bar{\Gamma} \coloneqq \Gamma/\normal{N}$, then we have an isomorphism
\[\HHH^2(\Gamma, \Lambda; V) \cong \HHH^2(\bar{\Gamma}, \bar{\Lambda}; V),\]
for all $\bar{\Gamma}$-modules $V$. In our setting, $\Lambda = N$ so $\bar{\Lambda} = 1$ and $\HHH^2(\Gamma, \Lambda; V) \cong \HHH^2(\bar{\Gamma}; V)$. Now the exact sequence above is part of the long exact sequence for the pair $(\Gamma, \Lambda)$.
\end{proof}

\begin{proof}[Proof of \cref{one rel quotients}]
Let $\Gamma$ be an infinite relatively hyperbolic group with property $\T$, which automatically implies that $\Gamma$ is non-elementary. We first quotient out the maximal finite normal subgroup, to assume that $\Gamma$ has no non-trivial finite normal subgroups \cite[Lemma 5.10]{hull} (this introduces the possible finite kernel). \cref{find CL} gives an infinite cyclic subgroup $\Lambda$ such that $(\Gamma, \Lambda, \Lambda)$ is a Cohen--Lyndon triple; let $\bar{\Gamma} \coloneqq \Gamma/\normal{\Lambda}$, and let $V$ be a unitary $\bar{\Gamma}$-representation. Now $\HHH^1(\Gamma; V) = 0$ because $\Gamma$ has property $\T$, hence \cref{CL vs T2} gives an injection $\HHH^1(\Lambda; V) \hookrightarrow \HHH^2(\bar{\Gamma}; V)$. Because $V$ is a trivial $\Lambda$-module, and $\Lambda \cong \mathbb{Z}$, we see that $\HHH^1(\Lambda; V) \cong V \neq 0$.
We obtained $\bar{\Gamma}$ by adding finitely many relations, hence this construction preserves finite presentability.
\end{proof}

\begin{remark}
\label{L1}

The proof shows more generally that for every $\bar{\Gamma}$-module $V$:
\[\HHH^1(\Gamma; V) = 0 \quad \Rightarrow \quad \HHH^2(\bar{\Gamma}; V) \neq 0.\]
We used that $\Gamma$ has property $\T$ to have this for all unitary $\bar{\Gamma}$-representations, but using \cite{L1} we can similarly deduce it for all $L^1$-spaces with a $\bar{\Gamma}$-action. Higher vanishing with $L^1$-coefficients has recently gained attention because of its geometric implications: it forces actions on finite-dimensional contractible complexes of a given dimension to have a finite orbit. See \cite[Theorem 84]{ICM} for the precise statement, which is proved in upcoming work of Bader--Bader--Bader--Sauer \cite{L1:lattices}.
\end{remark}

\begin{remark}
Not all infinite quotients of a group with $\TT$ must fail to have $\TT$. For a normal subgroup $N < \Gamma$, denoting $\bar{\Gamma} \coloneqq \Gamma/N$ and letting $V$ be a $\bar{\Gamma}$-module, the inflation-restriction exact sequence gives:
\[\HHH^1(\Gamma; V) \to \HHH^1(N; V)^{\bar{\Gamma}} \to \HHH^2(\bar{\Gamma}; V) \to \HHH^2(\Gamma; V).\]
If we now assume that $V$ is a unitary $\bar{\Gamma}$-representation with no invariant vectors, and $\Gamma$ has property $\TT$, then we have an isomorphism $\HHH^2(\bar{\Gamma}; V) \cong \HHH^1(N; V)^{\bar{\Gamma}}$.
Because $V$ is a trivial $N$-module, this shows in particular that if $\HHH^1(N; \mathbb{Q}) = 0$ then $\bar{\Gamma}$ has property $\TT$. Property $\TTT$ in similarly inherited.
\end{remark}

\begin{remark}
Let $\Gamma$ be a group as in \cref{main}, and let $(\phi_n \colon \Gamma \to \Un)_n$ be an asymptotic homomorphism. \cite[Section 3]{DCGLT} associates a unitary $\Gamma$-representation $V$ and a class $\alpha \in \HHH^2(\Gamma; V)$ which vanishes if and only if $(\phi_n)_n$ has \emph{defect diminishing}; this holds for all asymptotic homomorphisms if and only if $\Gamma$ is Frobenius stable \emph{with a linear rate} \cite{defdim}. Since $\HHH^2(\Gamma; V) \neq 0$, it is tempting to say that $\Gamma$ is Frobenius stable with a superlinear rate. However, our non-vanishing result says nothing about $\alpha$.
\end{remark}

\pagebreak

\section{Post-scriptum: on AI benchmarks}

About six months after this paper first appeared on the arXiv, I received the following email from one of the authors of \cite{bench}.

\medskip

\small

[...] We are reaching out because we are developing a benchmark to evaluate the mathematical reasoning abilities of large language models, and your paper was selected as one of the source papers for this effort. [...]

The attached PDF contains one multiple-choice question based on your work. It includes the theorem statement, a proof sketch, the question, and the answer options. If you have the time and are willing to do so, we would be very grateful if you could read the PDF and help us assess whether the question is clear, faithfully reflects the theorem, and has a unique correct answer. [...]

We also want to emphasize that our intention is to better understand how AI systems can engage with mathematical reasoning, while fully respecting the originality and depth of the underlying mathematical work. If this use of AI in relation to mathematical research raises any concerns or feels inappropriate in any way, we sincerely apologize. [...]

\normalsize

\medskip

I replied to the email, twice, but have yet to receive an answer. This is perhaps due to the fact that the email itself was automated, as suggested by the announced proof sketch reading ``No expanded sketch found''.

Here I want to discuss the content of that PDF, because in a time where many news headlines give the perception that AI has ``solved'' much of mathematics, this experience highlights some of the fundamental shortcomings of this sort of endeavour. If this use of mathematics in relation to AI research feels inappropriate in any way, I sincerely apologise.

\begin{question}
\label{AI}
Let $G$ be a countable recursively presented group. Which existence statement holds?
\begin{enumerate}[(a)]
\item\label{A} There exists a finitely presented group $\Gamma$ into which $G$ embeds such that $\Gamma$ has property $\T$, is Frobenius stable, and does not have property $\TT$.
\item\label{B} There exists a finitely presented group $\Gamma$ into which $G$ embeds such that $\Gamma$ has property $\T$, is Frobenius stable, does not have property $\TT$, and satisfies $\HHH^2(\Gamma; V) \neq 0$ for every irreducible finite-dimensional unitary $\Gamma$-representation $V$, but $\HHH^2(\Gamma; W) = 0$ for some unitary $\Gamma$-representation $W$.
\item\label{C} There exists a finitely presented group $\Gamma$ into which $G$ embeds such that $\Gamma$ has property $\T$, is Frobenius stable, does not have property $\TT$, and in fact satisfies $\HHH^2(\Gamma; V) \neq 0$ for every unitary $\Gamma$-representation $V$.
\item\label{D} There exists a finitely presented group $\Gamma$ into which $G$ embeds such that $\Gamma$ has property $\T$, is Frobenius stable, does not have property $\TT$, and nevertheless satisfies $\HHH^2(\Gamma; V) = 0$ for every finite-dimensional unitary $\Gamma$-representation $V$.
\item\label{E} There exists a finitely presented group $\Gamma$ into which $G$ embeds such that $\Gamma$ has property $\T$, is Frobenius stable, does not have property $\TT$, and satisfies $\HHH^2(\Gamma; V) \neq 0$ for every non-trivial unitary $\Gamma$-representation $V$, but $\HHH^2(\Gamma; V) = 0$ for the trivial unitary representation.
\end{enumerate}
\end{question}

It seems clear that turning \cref{main} into \cref{AI} was itself the work of a large language model with no human supervision, as any human reading this would realise that option \eqref{C} is \cref{main} and option \eqref{A} is just a subset of the same statement, so they are obviously both correct.

Besides this, \cref{AI} fails to capture the very nature of the statement. Unambiguously identifying a unique correct answer would involve proving an existence result, but also four non-existence results. Perhaps a reminder is needed: \emph{a small perturbation of a true statement can still be true}, also in this case, see \cref{AI:answers}.

This is an illustrative example of the problematic nature of AI benchmarks that try to condense an arbitrary mathematical statement into one that can be easily algorithmically checked. I have contributed to benchmarks with questions of this form \cite{AI1, AI2}, but those were specifically designed (by humans) to fit the format. The vast majority of mathematics is not amenable to such simplifications. Scraping the arXiv for papers to autonomously distil into a multiple-choice question misses the point.

\begin{acks*}
I thank Alon Dogon, Tim Gehrunger, Simon Machado, Marco Moraschini, Anand Tadipatri, Henry Wilton and Julian Wykowski for insightful comments. I subscribe to the Leiden Declaration on Artificial Intelligence and Mathematics.
\end{acks*}

We end by proving that (at least) four out of the five proposed options in \cref{AI} are correct. This will use variations of the construction leading to \cref{main}.

\begin{proposition}
\label{AI:answers}
Options \eqref{A}, \eqref{B}, \eqref{C} and \eqref{D} in \cref{AI} are all correct.
\end{proposition}

\begin{proof}
As we already mentioned, options \eqref{A} and \eqref{C} follow directly from \cref{main}.

\medskip

Next, we prove that option \eqref{D} is correct. We will use that to every group $G$ one can associate a group $V(G)$ (the ``labelled Thompson group''), with the following properties: $V(G)$ is recursively presented if $G$ is, it has solvable word problem if $G$ does \cite{BHT, WWZZ}, and it is always acyclic \cite{palmer:wu}.

Let $G$ be a countable recursively presented group. We apply \cref{main} to obtain a finitely presented group $\Delta$ into which $V(G)$ embeds, such that $\Delta$ has property $\T$, is Frobenius stable, and $\HHH^2(\Delta; V) \neq 0$ for every unitary $\Delta$-representation $V$. Frobenius stability was established the following way: given a non-Frobenius approximable group $A$ with solvable word problem, we can construct $\Delta$ so that every non-identity element of $A$ normally generates $\Delta$, and so $\Delta$ has no non-trivial Frobenius approximable quotients by \cref{normalgen}. In the proof we chose $A$ to be a non-Frobenius approximable group $B$ from \cite{DCGLT}, here we instead take $A = V(B)$, which still has solvable word problem and is moreover acyclic.

Since $\Delta$ has $\T$, its abelianisation is finite, but since it has no non-trivial Frobenius approximable quotients, $\Delta$ must be perfect. Let $Z \coloneqq \HHH_2(\Delta; \Z)$, which is a finitely generated abelian group. By \cite[Theorem 6.9.5]{weibel}, we may consider the universal central extension
\[1 \to Z \to \Gamma \to \Delta \to 1,\]
which by \cite[Corollary 6.9.8]{weibel} satisfies $\HHH_i(\Gamma; \Z) = 0$ for $i \in \{ 1, 2 \}$. We will show that $\Gamma$ satisfies the conditions in option \eqref{D}.

First, $\Gamma$ is finitely presented as an extension of two finitely presented groups. Because $V(G)$ is acyclic and embeds into $\Delta$, there is a section $V(G) \to \Gamma$, and thus $G$ embeds into $\Gamma$. Similarly, since we chose $A$ to be acyclic, there is a section $\sigma \colon A \to \Gamma$. We claim that every non-identity element of $\sigma(A)$ normally generates $\Gamma$. Indeed, let $N$ be the normal closure of $\sigma(a)$ in $\Gamma$, for some $1 \neq a \in A$. Then the image of $N$ under the quotient $\Gamma \to \Delta$ is the normal closure of $1 \neq a \in A$, so it is equal to $\Delta$. This shows that $NZ = \Gamma$, so $Z$ surjects onto $\Gamma/N$, which is therefore abelian. Because $\Gamma$ is perfect, this is only possible if $N = \Gamma$. \cref{normalgen} now implies that $\Gamma$ has no non-trivial Frobenius approximable quotients, so it is Frobenius stable.

Property $\T$ for $\Gamma$ follows from \cite[Th{\'e}or{\`e}me 2.c.12]{dlHV}. For the failure of property $\TT$, let $V$ be a unitary $\Delta$-representation with $V^\Delta = 0$, and consider the pullback of $V$ to $\Gamma$. The inflation-restriction exact sequence includes
\[\HHH^1(Z; V)^\Delta \to \HHH^2(\Delta; V^Z) \to \HHH^2(\Gamma; V).\]
Since $Z$ acts trivially on $V$, we have an isomorphism of $\Delta$-modules $\HHH^1(Z; V) \cong \oplus_{\mathrm{rk}
(Z)} V$, so $\HHH^1(Z; V)^\Delta = 0$. Hence $\HHH^2(\Delta; V^Z) = \HHH^2(\Delta; V) \neq 0$ embeds into $\HHH^2(\Gamma; V)$.

Since finitely generated linear groups are residually finite \cite{malcev}, and $\Gamma$ has no finite quotients, the only finite-dimensional unitary $\Gamma$-representations are the trivial ones. Since $\HHH_2(\Gamma; \Z) = 0$, by the universal coefficient theorem $\HHH^2(\Gamma; V) = 0$ for every finite-dimensional unitary $\Gamma$-representation, and we conclude.

\medskip

Finally, we prove that option \eqref{B} is correct. Let $G$ be a countable recursively presented group. By \cref{main} there is a finitely presented group $\Delta$ into which $G$ embeds, such that $\Delta$ has property $\T$, is Frobenius stable, and $\HHH^2(\Delta; V) \neq 0$ for every unitary $\Delta$-representation $V$. Again, we use that the Frobenius stability of $\Delta$ follows from the stronger fact that it has no non-trivial Frobenius approximable quotients.

Now consider the group $\Gamma = \Delta_1 \times \Delta_2$, where each $\Delta_i$ is a copy of $\Delta$. We will show that $\Gamma$ satisfies the conditions in option \eqref{B}. First, it is a finitely presented group into which $G$ embeds, and it has property $\T$ by \cite[Proposition 1.9]{dlHV}. If $\Gamma/N$ is a Frobenius approximable quotient, then the image of $\Delta_i$ in $\Gamma/N$ is a Frobenius approximable quotient of $\Delta_i$; this shows that $\Gamma$ has no non-trivial Frobenius approximable quotients, and so it is Frobenius stable, and has no non-trivial finite quotients. Again, this implies that the only finite-dimensional unitary $\Gamma$-representations are the trivial ones.

We claim that if $V$ is a unitary $\Gamma$-representation such that $V^{\Delta_1} \neq 0$, then $\HHH^2(\Gamma; V) \neq 0$. Indeed the inflation-restriction exact sequence includes
\[\HHH^1(\Delta_1; V)^{\Delta_2} \to \HHH^2(\Delta_2; V^{\Delta_1}) \to \HHH^2(\Gamma; V).\]
Since $\Delta_1$ has $\T$, this shows that $\HHH^2(\Delta_2; V^{\Delta_1}) \neq 0$ embeds into $\HHH^2(\Gamma; V)$. Taking $V$ to be the pullback of a unitary $\Delta_2$-representation with $V^{\Delta_2} = 0$, we see that $\Gamma$ does not have $\TT$. Taking $V$ to be $\C$, that is the unique irreducible finite-dimensional unitary $\Gamma$-representation, we see that $\HHH^2(\Gamma; \C) \neq 0$.

It remains to show that there exists a unitary $\Gamma$-representation $W$ such that $\HHH^2(\Gamma; W) = 0$. We claim that this holds for $W = \ell^2(\Gamma) = \ell^2(\Delta_1) \otimes \ell^2(\Delta_2)$. Indeed, by K{\"u}nneth's formula
\[\HHH^2(\Gamma; W) = \sum\limits_{i+j = 2} \HHH^i(\Delta_1; \ell^2(\Delta_1)) \otimes \HHH^j(\Delta_2; \ell^2(\Delta_2)),\]
and this vanishes since $\HHH^0(\Delta; \ell^2(\Delta)) = 0$ (because $\Delta$ is infinite) and $\HHH^1(\Delta; \ell^2(\Delta)) = 0$ (because $\Delta$ has $\T$).
\end{proof}

\pagebreak

\footnotesize

\bibliographystyle{amsalpha}
\bibliography{ref}

\vspace{0.5cm}

\normalsize

\noindent{\textsc{Department of Pure Mathematics and Mathematical Statistics, University of Cambridge, UK}}

\noindent{\textit{E-mail address:} \texttt{ff373@cam.ac.uk}}

\end{document}